\documentclass[reqno,11pt]{amsart}
\usepackage{amsmath,amsthm,amscd,amssymb,graphicx,enumerate,latexsym}

\usepackage[raiselinks,colorlinks]{hyperref}
\hypersetup{citecolor=blue}

\numberwithin{equation}{section}

\theoremstyle{plain}
\newtheorem{thm}{Theorem}[section]
\newtheorem{lemma}[thm]{Lemma}

\newtheorem{conj}[thm]{Conjecture}

\theoremstyle{definition}

\theoremstyle{remark}
\newtheorem{remark}{Remark}[section]

\def\Re{\mathop{\rm Re}\nolimits}

\newcommand{\s}{\text{\rm{s}}}

\DeclareMathOperator*{\supp}{supp}
\DeclareMathOperator*{\arccosh}{arccosh}
\allowdisplaybreaks

\title[Szeg\H o theorems with a critical point of arbitrary order]{On higher-order Szeg\H o theorems with a single critical point of arbitrary order}
\author{Milivoje Lukic}
\address{Rice University, 6100 Main Street, Mathematics MS 136, Houston, TX 77005}
\date\today
\email{milivoje.lukic@rice.edu}
\thanks{The author was partially supported by NSF Grant DMS-1301582}

\keywords{Szeg\H o theorem, absolutely continuous spectrum, decaying potential}
\subjclass[2010]{47B36,42C05,39A70}

\begin{document}

\begin{abstract}
We prove the following higher-order Szeg\H o theorems: if a measure on the unit circle has absolutely continuous part $w(\theta)$ and Verblunsky coefficients $\alpha$ with square-summable variation, then for any positive integer $m$,
\[
\int (1-\cos \theta)^m \log w(\theta) d\theta
\]
is finite if and only if $\alpha \in \ell^{2m+2}$.

This is the first known equivalence result of this kind in the regime of very slow decay, i.e.\ with $\ell^p$ conditions with arbitrarily large $p$. The usual difficulty of controlling higher-order sum rules is avoided by a new test sequence approach.
\end{abstract}

\maketitle
\section{Introduction}

A Borel probability measure $\mu$ on the unit circle $\partial\mathbb{D}$ whose support isn't a finite set corresponds bijectively to its sequence of Verblunsky coefficients $\alpha = \{\alpha_n \}_{n=0}^\infty \in \mathbb{D}^\infty$ \cite{Verblunsky35}. The correspondence is given in terms of orthogonal polynomials $\varphi_n(z)$, $\deg \varphi_n = n$, which are obtained by orthonormalizing the sequence $1, z, z^2, \dots$ with respect to $\mu$ and obey a recursion relation in terms of $\alpha_n$,
\[
\varphi_{n+1}(z) = \frac 1{\sqrt{1-\lvert \alpha_n\rvert^2}} \left( z \varphi_n(z) - \bar \alpha_n z^n \overline{ \varphi_n(1 / \bar z) } \right).
\]
Alternatively, one can construct from $\alpha$ a five-diagonal matrix on $\ell^2(\mathbb{N}_0)$, called a CMV matrix after \cite{CanteroMoralVelazquez03}, which is unitary and has $\mu$ as its spectral measure. That construction shows that the correspondence between $\alpha$ and $\mu$  is similar to the correspondence between a Jacobi matrix  or Schr\"odinger operator and its spectral measure, and puts this correspondence in the realm of spectral theory.

Let us denote the Lebesgue decomposition of $\mu$ by
\[
d\mu = w(\theta) \frac{d\theta}{2\pi} + d\mu_s.
\]
Szeg\H o's theorem \cite{Szego20,Szego21,Verblunsky36} states that $\alpha \in \ell^2$ is equivalent to
\[
\int \log w(\theta) \frac{d\theta}{2\pi} > -\infty.
\]
Such equivalence results, between an operator condition and a spectral measure condition, are rare in spectral theory and are of great interest. In the last fifteen years, Szeg\H o's theorem has been carried over  to Schr\"odinger operators by Deift--Killip~\cite{DeiftKillip99} and Killip--Simon~\cite{KillipSimon09} and to Jacobi matrices by Killip--Simon~\cite{KillipSimon03}, and higher-order Szeg\H o theorems have been the subject of many papers \cite{MolchanovNovitskiiVainberg01,LaptevNabokoSafronov03,Kupin04,Kupin05,OPUC1,NazarovPeherstorferVolbergYuditskii05,SimonZlatos05,GolinskiiZlatos07}, but the general conjecture for them remains unsolved.

Let $\delta$ denote the forward derivative on sequences,
\[
(\delta\alpha)_n = \alpha_{n+1} - \alpha_n.
\]
Our main result is the following higher-order Szeg\H o theorem.

\begin{thm}\label{T1.1}
Assume that $\delta \alpha\in \ell^2$. For any $m\in \mathbb{N}_0$, the condition
\begin{equation}\label{1.1}
\int (1-\cos \theta)^m \log w(\theta) \frac{d\theta}{2\pi} > -\infty
\end{equation}
is equivalent to $\alpha \in \ell^{2m+2}$.
\end{thm}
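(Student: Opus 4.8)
The plan is to treat the two implications by different tools, writing the weight as $(1-\cos\theta)^m = 2^{-m}\lvert 1-e^{i\theta}\rvert^{2m}$, so that the left side of \eqref{1.1} is governed by the behaviour of $\log w$ near $\theta = 0$ (away from $\theta=0$ the weight is bounded below and the classical Szeg\H o theorem applies verbatim). The base case $m=0$ is exactly the Szeg\H o--Verblunsky theorem, $\int \log w\,\frac{d\theta}{2\pi}>-\infty\iff\alpha\in\ell^2$, which needs no hypothesis on $\delta\alpha$. For $m\ge1$ I would deliberately avoid the explicit higher-order sum rule, whose terms are non-sign-definite polynomials in $\alpha_n,\dots,\alpha_{n+m}$ and converge only conditionally, and instead work with genuine inequalities for the functional $F(\mu)=\int(1-\cos\theta)^m\log w\,\frac{d\theta}{2\pi}$.

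For the direction $\alpha\in\ell^{2m+2}\Rightarrow\eqref{1.1}$, the plan is to use the weak-$*$ upper semicontinuity of the entropy functional $F$: if $\mu_N\rightharpoonup\mu$ then $F(\mu)\ge\limsup_N F(\mu_N)$. It therefore suffices to produce a \emph{test sequence} $\mu_N\rightharpoonup\mu$ for which $F(\mu_N)$ is computable and bounded below. I would build $\mu_N$ from the Verblunsky coefficients of $\mu$ by a modification (truncation or smoothing) that is slowly varying enough to keep the weighted integral explicit yet close enough to $\alpha$ to converge weak-$*$; the order-$2m$ vanishing of the weight at $\theta=0$ is designed to absorb exactly the local singularity, and the resulting lower bound for $F(\mu_N)$ is controlled by $\sum_n\lvert\alpha_n\rvert^{2m+2}$ (finite by hypothesis) together with $\sum_n\lvert(\delta\alpha)_n\rvert^2$ (finite by the standing assumption). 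Passing to the limit then gives $F(\mu)>-\infty$.

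For the converse, $\eqref{1.1}\Rightarrow\alpha\in\ell^{2m+2}$, semicontinuity is the wrong direction and I would instead establish an \emph{upper} bound of the form $F(\mu)\le C-c\sum_n\lvert\alpha_n\rvert^{2m+2}$, with $c>0$ and $C$ depending only on $\lVert\delta\alpha\rVert_{\ell^2}$. Such an upper bound follows from Jensen's inequality applied to the probability measure $(1-\cos\theta)^m\frac{d\theta}{2\pi}/\lVert(1-\cos\theta)^m\rVert_1$ together with a test polynomial adapted to $(1-e^{i\theta})^m$ and to the slowly varying structure of $\alpha$; concretely, on long blocks where $\delta\alpha\in\ell^2$ forces $\alpha$ to be nearly a constant $a$, the measure is close to the Geronimus measure with a spectral gap of size $\sim\lvert a\rvert$ around $\theta=0$, and the interaction of this gap with the order-$2m$ zero of the weight produces precisely the exponent $2m+2$. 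Since the left side of \eqref{1.1} is finite, the upper bound forces $\sum_n\lvert\alpha_n\rvert^{2m+2}<\infty$.

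The main obstacle is the regime in which $\log w\notin L^1$, i.e. the ordinary Szeg\H o integral is $-\infty$ but \eqref{1.1} is nonetheless finite because the non-integrability is localized at $\theta=0$ and tamed by the vanishing of $(1-\cos\theta)^m$. In this regime the Fourier coefficients $\widehat{\log w}_k$ underlying the naive sum rule are not even defined, which is exactly why the test-sequence and Jensen arguments---based on bona fide upper and lower bounds rather than on an exact identity---are needed. The delicate point is to carry out the slowly varying comparison with constant-coefficient models so that all error terms are genuinely summable and controlled by $\lVert\delta\alpha\rVert_{\ell^2}^2$ uniformly in $m$, and to match the block-by-block contributions to the single sharp power $2m+2$.
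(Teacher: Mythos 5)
Your proposal is a program rather than a proof, and the two steps where all the difficulty lives are exactly the ones left unsubstantiated. For the direction $\alpha\in\ell^{2m+2}\Rightarrow\eqref{1.1}$ you invoke upper semicontinuity of the entropy functional and assert that the approximants $\mu_N$ satisfy a lower bound on $F(\mu_N)$ ``controlled by $\sum_n|\alpha_n|^{2m+2}$ together with $\|\delta\alpha\|_2^2$''; but computing or bounding $F(\mu_N)$ for Bernstein--Szeg\H o-type approximants is precisely where the higher-order sum rule enters, and the quantities one must control are non-sign-definite $k$-fold products of $\alpha$'s that converge only conditionally. You cannot both ``deliberately avoid the explicit higher-order sum rule'' and obtain that bound --- no alternative mechanism is supplied. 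Likewise, the converse rests on a claimed inequality $F(\mu)\le C-c\sum_n|\alpha_n|^{2m+2}$ from Jensen plus block-by-block comparison with Geronimus measures; the statement that ``the interaction of this gap with the order-$2m$ zero of the weight produces precisely the exponent $2m+2$'' is a heuristic for why the exponent \emph{should} be $2m+2$, not an argument, and the hypothesis $\delta\alpha\in\ell^2$ does not make $\alpha$ nearly constant on long blocks in any quantitatively summable sense. A smaller but real error: away from $\theta=0$ the integrability of $\log w$ is \emph{not} ``the classical Szeg\H o theorem verbatim'' (which is a global statement and gives $-\infty$ whenever $\alpha\notin\ell^2$); it is the nontrivial Theorem~\ref{T1.2}.

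The paper's route is genuinely different and worth contrasting. It \emph{does} use the explicit Golinskii--Zlato\v s sum rule, but only to prove the weak statement (Theorem~\ref{T1.4}) that \eqref{1.1} is equivalent to $\alpha\in\ell^{2l}$ for \emph{some} $l\le m+1$: each $k$-fold product $\prod_j\alpha_{n+t_{2j-1}}\bar\alpha_{n+t_{2j}}$ is replaced by $|\alpha_n|^{2k}$ at the cost of a conditionally convergent error (Lemma~\ref{L2.1}, which exploits that only real parts enter), collapsing the sum rule to a power series $\sum_k c_k|\alpha_n|^{2k}$ whose first nonzero coefficient sits at some unknown index $l\le m+1$ and is necessarily negative. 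The exponent $l$ is then pinned down not by any inequality but by a single explicit test sequence, $\alpha_n=(n+2)^{-1/(2m)}\in\ell^{2m+1}\setminus\ell^{2m}$, for which the Pr\"ufer-variable analysis of Theorem~\ref{T1.5} yields $|\log w(\theta)|=O(|\theta|^{-2m})$ and hence \eqref{1.1}, forcing $l>m$ and so $l=m+1$. That ``test sequence'' is one exemplar used to identify a universal exponent, not the weak-$*$ approximating family your plan describes; this pivot is the paper's central new idea and is absent from your proposal.
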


\begin{remark}\label{R1.1}
The integral \eqref{1.1} is always well defined and never $+\infty$ since the positive part of the integrand is $L^1$. This is because $ (1-\cos\theta)^m \le 2^m$ and $\log w(\theta) \le w(\theta)$ imply
\begin{equation}\label{1.2}
\int \bigl( (1-\cos\theta)^m \log w(\theta)\bigr)_+ \frac{d\theta}{2\pi} \le \int  2^m w(\theta) \frac{d\theta}{2\pi} \le 2^m.
\end{equation}
\end{remark}

Theorem~\ref{T1.1} is similar to results of Molchanov--Novitskii--Vainberg~\cite{MolchanovNovitskiiVainberg01} and Kupin~\cite{Kupin05} for Schr\"odinger and Jacobi operators, but those results are only implications in one direction, from an operator condition to a conclusion about the measure; moreover, \cite{Kupin05} uses additional technical assumptions and \cite{MolchanovNovitskiiVainberg01} concludes presence of a.c.\ spectrum but not an integral condition on it.

Theorem~\ref{T1.1} is the first known equivalence statement of this kind with $\ell^p$ conditions with arbitrarily large $p$, and we expect that the ideas used here can be extended to further improve our understanding of higher-order Szeg\H o theorems, not only for CMV matrices but also for Jacobi and Schr\"odinger operators.

Theorem~\ref{T1.1} is a special case of the following conjecture of Simon~\cite{OPUC1}.

\begin{conj}[{\cite[Section 2.8]{OPUC1}}] \label{C1.3}
For any $m\in \mathbb{N}_0$, \eqref{1.1} is equivalent to $\alpha \in \ell^{2m+2}$ and $\delta^m \alpha \in \ell^2$.
\end{conj}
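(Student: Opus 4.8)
The plan is to localize the problem at the critical point $\theta=0$ and to establish the equivalence through a test-sequence comparison, using as guiding principle the exact pairing between the spectral weight and a difference operator on the coefficients. At the level of Fourier symbols one has $(1-\cos\theta)^m = 2^{-m}|e^{i\theta}-1|^{2m}$, and, modulo boundary terms, Plancherel gives
\[
\sum_{n=0}^\infty |(\delta^m\alpha)_n|^2 = \frac{2^m}{2\pi}\int (1-\cos\theta)^m\Bigl|\sum_{k=0}^\infty \alpha_k e^{ik\theta}\Bigr|^2\,d\theta,
\]
so that $\delta^m$ is the coefficient-side counterpart of the weight $(1-\cos\theta)^m$. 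This predicts that finiteness of \eqref{1.1} should be governed by a sum rule whose leading negative terms are $\sum_n|(\delta^m\alpha)_n|^2$ and $\sum_n|\alpha_n|^{2m+2}$; since Remark~\ref{R1.1} makes \eqref{1.1} bounded above, the content is that these two sums are finite precisely when \eqref{1.1} holds. The classical Szeg\H o theorem is the case $m=0$, where $\delta^0\alpha=\alpha$ collapses both conditions into $\alpha\in\ell^2$.

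For the sufficiency direction, assume $\alpha\in\ell^{2m+2}$ and $\delta^m\alpha\in\ell^2$ and build a test sequence: a model set of Verblunsky coefficients $\beta$ that reproduces the slowly varying, order-$m$ part of $\alpha$ near $n=\infty$ and for which the weighted integral is finite by construction (for a smooth model $\delta\beta\in\ell^2$, so Theorem~\ref{T1.1} applies directly). I would then run a relative sum rule comparing $\mu$ to the model measure, arranged so that the difference of weighted log integrals is an absolutely convergent series in $\alpha-\beta$ and its differences; the point of the test-sequence method, as opposed to a direct order-$m$ sum rule, is that these relative terms carry a definite sign or are dominated by H\"older products of $\|\alpha\|_{\ell^{2m+2}}$ and $\|\delta^m\alpha\|_{\ell^2}$, so their exact values never need to be computed. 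Finiteness then transfers from the model to $\mu$.

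For the necessity direction, assume \eqref{1.1}. First, $\alpha_n\to0$ (which I would extract from finiteness of the leading term), so Theorem~\ref{T1.2} gives $\int_I\log w>-\infty$ on every closed arc $I$ avoiding $\theta=0$; the entire obstruction therefore sits at the critical point, and I may subtract a reference measure that agrees with $\mu$ away from $\theta=0$. Against this localized picture I would again use the test-sequence comparison, now run in reverse: a lower bound on \eqref{1.1}, i.e.\ finiteness, combined with the pairing above, forces the weighted energy $\sum_n|(\delta^m\alpha)_n|^2$ to be finite, while the leading coefficient term forces $\alpha\in\ell^{2m+2}$.

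The hard part---and exactly where Theorem~\ref{T1.1} falls short of Conjecture~\ref{C1.3}---is dropping the smoothness hypothesis $\delta\alpha\in\ell^2$. With it one has $\delta^k\alpha\in\ell^2$ for all $k\le m$ for free, the relative terms telescope into absolutely convergent series, and the model $\beta$ can be taken genuinely smooth; without it, $\alpha$ may fail to lie in $\ell^2$ at all, the genuinely new regime being $\alpha_n\sim n^{-\beta}$ with $1/(2m+2)<\beta\le\tfrac12$, where $\delta^m\alpha\in\ell^2$ and $\alpha\in\ell^{2m+2}$ hold but the comparison with any smooth model produces only conditionally convergent sums of products of coefficients at separated indices. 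Controlling those sums---finding a cancellation structure robust to the loss of $\delta\alpha\in\ell^2$, or a test sequence adapted to the actual oscillatory $\alpha$ rather than to a smooth surrogate---is what I expect to be the essential difficulty, and doing it uniformly in $m$ is precisely what would promote Theorem~\ref{T1.1} to the full conjecture.
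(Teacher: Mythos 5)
First, a point of framing: the statement you are proving is Conjecture~\ref{C1.3}, which is \emph{open} --- the paper does not prove it, and its main result, Theorem~\ref{T1.1}, is exactly the special case in which the extra hypothesis $\delta\alpha\in\ell^2$ is imposed (the only other known cases are $m=0,1,2$ from \cite{Szego20,OPUC1,SimonZlatos05}). So there is no ``paper proof'' to match; what must be judged is whether your argument closes the gap beyond Theorem~\ref{T1.1}, and it does not. The decisive missing step is the ``relative sum rule comparing $\mu$ to the model measure'' whose error terms ``carry a definite sign or are dominated by H\"older products of $\lVert\alpha\rVert_{\ell^{2m+2}}$ and $\lVert\delta^m\alpha\rVert_{\ell^2}$'': no such object is constructed, and the known sum rule here, \eqref{2.1}--\eqref{2.2} of \cite{GolinskiiZlatos07}, has precisely the opposite character --- it is a sum of $k$-fold products of coefficients at separated indices with no sign structure, and taming those cross terms \emph{is} the open problem, which your final paragraph candidly concedes. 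Even under the paper's extra hypothesis, Lemma~\ref{L2.1} obtains only conditional convergence via a telescoping argument that consumes $\delta\alpha\in\ell^2$; nothing in your sketch replaces it. Note also that the hypotheses of the conjecture genuinely exceed Theorem~\ref{T1.1}'s reach: chirp-like sequences such as $\alpha_n=n^{-1/5}e^{in^{4/5}}$ (for $m=2$) satisfy $\alpha\in\ell^6$ and $\delta^2\alpha\in\ell^2$ but $\delta\alpha\notin\ell^2$, so the sufficiency direction cannot be reduced to Theorem~\ref{T1.1} by showing $\delta\alpha\in\ell^2$ is automatic. Finally, the Plancherel pairing between $(1-\cos\theta)^m$ and $\delta^m$ is only a quadratic-order heuristic relating the weight to $\lvert\hat\alpha\rvert^2$, not to $\log w$; it motivates the conjecture (as in \cite[Section 2.8]{OPUC1}) but cannot serve as a proof principle --- indeed the more general conjecture with an arbitrary nonnegative trigonometric polynomial, supported by the identical heuristic, was disproved in \cite{Lukic7}.

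Two concrete errors along the way. In the necessity direction you invoke Theorem~\ref{T1.2}, but its hypothesis $\delta\alpha\in\ell^2$ is not available: it is neither assumed in the conjecture nor implied by \eqref{1.1}, and (per the example above) not implied by $\alpha\in\ell^{2m+2}$ together with $\delta^m\alpha\in\ell^2$. And $\alpha_n\to0$ cannot be ``extracted from finiteness of the leading term'' of a sum rule whose convergence you have not yet established --- that is circular; the correct route, used in the paper's proof of Theorem~\ref{T1.4}, is Rakhmanov's theorem \cite{Rakhmanov83}: \eqref{1.1} forces $w>0$ a.e., hence $\alpha_n\to0$. It is also worth flagging that your ``test sequence'' is a different device from the paper's: there, the test-sequence method means exhibiting a single explicit sequence ($\alpha_n=(n+2)^{-1/(2m)}$, analyzed via Theorem~\ref{T1.5}) to pin down the exponent $l=m+1$ in Theorem~\ref{T1.4}, whereas you propose a coupling of $\mu$ to a smooth surrogate measure through a relative sum rule --- an interesting program, but one whose key convergence and positivity properties are asserted rather than proved, and which in the oscillatory regime $\alpha\notin\ell^2$ is exactly where the conjecture remains open.
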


Besides Szeg\H o's theorem ($m=0$), Conjecture~\ref{C1.3} has been proved for $m=1$ by Simon~\cite{OPUC1} and for $m=2$ by Simon--Zlato\v s~\cite{SimonZlatos05}. Golinskii--Zlato\v s~\cite{GolinskiiZlatos07} have proved a partial equivalence for any $m\in \mathbb{N}_0$: if $\alpha \in \ell^4$, then \eqref{1.1} is equivalent to $\delta^m \alpha \in \ell^2$. However, none of these previous results go beyond $\ell^6$ conditions, since they rely on manual manipulations of an expression involving $\alpha$ (see \eqref{2.1}, \eqref{2.2} below) and this expression very quickly becomes prohibitively complicated. 

Our result can also be motivated from a different point of view, by a result from Lukic \cite{Lukic8}, which is an extension of Jacobi matrix results of Denisov~\cite{Denisov09} and Kaluzhny--Shamis~\cite{KaluzhnyShamis12}.

\begin{thm}[\cite{Lukic8}] \label{T1.2} If $\delta \alpha \in \ell^2$ and $\lim_{n\to\infty} \alpha_n = 0$, then for every closed arc $I \subset\partial \mathbb{D} \setminus \{1\}$,
\[
\int_I \log w(\theta) \frac{d\theta}{2\pi} > -\infty.
\]
\end{thm}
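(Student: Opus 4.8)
The plan is to deduce the statement from the subordinacy theory for orthogonal polynomials on the unit circle, reducing it to a boundedness statement for transfer matrices together with a quantitative lower bound for $w$. Writing $T_n(z)$ for the product of Szeg\H o transfer matrices and $M(\theta)=\sup_n\lVert T_n(e^{i\theta})\rVert$, I would first record the standard fact that, on the set where $M(\theta)<\infty$, the measure is purely absolutely continuous near $\theta$ and the weight obeys $w(\theta)\ge c\,M(\theta)^{-2}$ (via the Christoffel function $\lambda_N=(\sum_{j<N}\lvert\varphi_j\rvert^2)^{-1}$ and the M\'at\'e--Nevai--Totik relation, or equivalently via boundary values of the Carath\'eodory function). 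Taking logarithms, $\log w(\theta)\ge -2\log M(\theta)-C$, so it suffices to show that for a.e.\ $\theta$ with $e^{i\theta}\in I$ one has $M(\theta)<\infty$ together with the integrable control $\int_I\log^+M(\theta)\,\frac{d\theta}{2\pi}<\infty$. The crucial point is that uniform-in-$\theta$ bounds are \emph{not} needed: a.e.\ finiteness with an $L^2$-type maximal majorant is enough, and it is this flexibility that makes the borderline hypothesis $\delta\alpha\in\ell^2$ (which does not force $\alpha\in\ell^2$) usable.

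The second step is to expose the elliptic structure of the one-step transfer matrix and linearize via a Pr\"ufer/EFGP reduction adapted to it. A direct computation shows that $A(\alpha,e^{i\theta})$ has determinant $e^{i\theta}$ and is elliptic, conjugate to a rotation by an angle $\psi$ with $\cos\psi=(1-\lvert\alpha\rvert^2)^{-1/2}\cos(\theta/2)$, precisely when $\lvert\alpha\rvert<\lvert\sin(\theta/2)\rvert$. On a closed arc $I\subset\partial\mathbb D\setminus\{1\}$ set $\delta=\inf_{e^{i\theta}\in I}\lvert\sin(\theta/2)\rvert>0$; since $\alpha_n\to0$ there is $N$ with $\lvert\alpha_n\rvert<\delta$ for $n\ge N$, so every transfer matrix past $N$ is uniformly elliptic with rotation angle $\psi_n(\theta)$ bounded away from $0$ and $\pi$. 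This is exactly where the exclusion of $z=1$ enters: as $\theta\to0$ the ellipticity condition degenerates and $\psi_n\to0$, destroying the oscillation we are about to exploit. Passing to EFGP variables $(R_n,\Psi_n)$ referred to the \emph{instantaneous} elliptic frame turns the matrix recursion into a scalar equation for the Pr\"ufer radius,
\[
\log R_{n}^2-\log R_N^2=\sum_{j=N}^{n-1}\Delta_j,\qquad \Delta_j=c\,(\delta\alpha)_j\,e^{2i\Psi_j}+\overline{c\,(\delta\alpha)_j\,e^{2i\Psi_j}}+O(\lvert(\delta\alpha)_j\rvert^2),
\]
with $\Psi_{j+1}-\Psi_j=2\psi_j(\theta)$, bounded below by a positive constant on $I$. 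The decisive structural gain is that because the frame is adapted at each step, a single elliptic matrix preserves $R_n$ exactly; hence the radius changes only through the \emph{mismatch} between consecutive frames, so $\Delta_j$ is driven by the variation $\delta\alpha$ and not by $\alpha$ itself. In particular the large diagonal drift produced by $\alpha_n$ is entirely absorbed into $\psi_n$ (note the dependence of $\cos\psi_n$ on $\lvert\alpha_n\rvert^2$), i.e.\ into the phase, and there is no $O(\lvert\alpha_j\rvert^2)$ term in the radius equation.

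It remains to bound the partial sums $\sum_{j=N}^{n-1}\Delta_j$, and this is the main obstacle. The second-order terms are harmless, since $\sum_j\lvert(\delta\alpha)_j\rvert^2<\infty$ makes them absolutely summable and uniformly bounded on $I$. The first-order piece $\sum_j(\delta\alpha)_j\,e^{2i\Psi_j}$ is the heart of the matter: it cannot be treated by summation by parts, which would require $\delta(\delta\alpha)\in\ell^1$, nor by absolute convergence, since $\delta\alpha\in\ell^2\not\subset\ell^1$. Instead I would exploit oscillation. After the change of variable $\theta\mapsto\bar\psi(\theta)$ (a diffeomorphism on $I$), the exponentials $e^{2i\Psi_j}$ form a nonstationary-phase system with instantaneous frequency bounded away from the resonant values $0$ and $\pi$, so the sum behaves like a Fourier series with $\ell^2$ coefficients $(\delta\alpha)_j$; a maximal bound of Carleson--Hunt type then shows that the maximal partial sum lies in $L^2(I)$, hence is finite a.e.\ with logarithm in $L^1(I)$ — exactly the control demanded in the first step. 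The genuine difficulty I expect to be most delicate is that $\Psi_j$ is \emph{not} linear in $j$: since $\psi_j-\bar\psi=O(\lvert\alpha_j\rvert^2)$ and $\alpha\notin\ell^2$ is allowed, the phase carries an unbounded but non-oscillatory drift $\sim\sum_{k<j}\lvert\alpha_k\rvert^2$. Showing that this drift merely reparametrizes the frequency while preserving the maximal estimate is where a Christ--Kiselev-type time-ordered argument, rather than a naive stationary-phase bound, is required. Once the first-order sum is controlled a.e.\ on $I$ with an $L^2$ maximal majorant, the Pr\"ufer radius is bounded a.e.\ with $\log^+M\in L^1(I)$, and the lower bound on $\int_I\log w$ follows.
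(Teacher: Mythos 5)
Your reduction is correct up to the decisive analytic step, and that step is a genuine gap. The ellipticity computation (the one-step Szeg\H o matrix is conjugate to a rotation precisely when $\lvert\alpha\rvert<\lvert\sin(\theta/2)\rvert$, with $\cos\psi=(1-\lvert\alpha\rvert^2)^{-1/2}\cos(\theta/2)$) is right, and so is the adapted-frame observation that the Pr\"ufer radius changes only through the mismatch of consecutive elliptic frames, so that the radius equation is driven by $\delta\alpha$ with absolutely summable $O(\lvert(\delta\alpha)_j\rvert^2)$ errors; this is the standard EFGP/adapted-basis reduction, used for Jacobi matrices by Kaluzhny--Shamis \cite{KaluzhnyShamis12}. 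Everything therefore hinges on your claim that the maximal partial sums of $\sum_j(\delta\alpha)_j e^{2i\Psi_j}$ admit an $L^2(I)$ majorant via a Carleson--Hunt bound. But Carleson--Hunt is a theorem about linear phases $e^{ij\xi}$. Your phase $\Psi_j(\theta)$ is $2\sum_{k<j}\psi_k(\theta)$ plus $O(\lvert(\delta\alpha)_k\rvert)$ corrections generated by the phase half of the recursion, and it carries the drift $\sum_{k<j}\bigl(\psi_k(\theta)-\bar\psi(\theta)\bigr)\asymp\sum_{k<j}\lvert\alpha_k\rvert^2$, which is unbounded (only $\alpha_n\to0$ is assumed; $\alpha\in\ell^2$ is not) and depends on the coefficient sequence itself, not merely on $(j,\theta)$; no change of variable $\theta\mapsto\bar\psi(\theta)$ converts such a system into a Fourier series. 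A maximal inequality for these phase-perturbed sums with coefficients merely in $\ell^2$ is exactly the open problem often called the nonlinear Carleson theorem, and the Christ--Kiselev time-ordered machinery you invoke to absorb the drift gives $\ell^p$ for $p<2$ and is known to break down at the endpoint $p=2$. So this is not a delicate technicality to be filled in later: as far as existing technology goes, a.e.\ finiteness of $\sup_n\lVert T_n(e^{i\theta})\rVert$ on arcs under $\delta\alpha\in\ell^2$ and $\alpha_n\to0$ alone is not a known theorem, and your route would in passing establish it.

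This is precisely why the actual proof of Theorem~\ref{T1.2} --- which the present paper does not contain, the result being quoted from \cite{Lukic8} --- avoids pointwise control of solutions altogether. Following Denisov \cite{Denisov09} and Kaluzhny--Shamis \cite{KaluzhnyShamis12}, the argument there runs through sum rules with trial weights vanishing at the critical point, combined with semicontinuity of the entropy under weak convergence of measures: the oscillatory first-order terms are handled inside a quantity already integrated in $\theta$, where summation by parts and square-summability of $\delta\alpha$ yield summable errors, and no maximal function is needed. The paper itself corroborates this division of labor: where pointwise asymptotics of $\log r_n(\eta)$ genuinely are required (Theorem~\ref{T1.5} and Section~\ref{S3}), the hypothesis is strengthened to $\delta\alpha\in\ell^1$, exactly because $\ell^1$ variation permits the uniform phase-summation control that $\ell^2$ variation does not. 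Your first step (the bound $w\ge c\,M^{-2}$ a.e.\ on the set where $M<\infty$, hence $\log w\ge -2\log M-C$) is essentially fine, but the input it demands is unavailable, so the proposal does not yield a proof.
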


From this point of view, integrability of $\log w$ away from $\theta=0$ is given and Theorem~\ref{T1.1} relates behavior around the critical point $\theta=0$ to decay properties of $\alpha$.

We rely on the approach of \cite{OPUC1,SimonZlatos05,GolinskiiZlatos07} and in particular a formula of \cite{GolinskiiZlatos07} that expresses the integral \eqref{1.1} in terms of the Verblunsky coefficients. We avoid the limitations of this approach by only using it to prove the following.

\begin{thm}\label{T1.4}
Assume that $\delta \alpha\in \ell^2$. For any $m\in \mathbb{N}_0$, there is an $l\in \mathbb{N}$ with $l\le m+1$ such that the condition \eqref{1.1} is equivalent to $\alpha \in \ell^{2l}$.
\end{thm}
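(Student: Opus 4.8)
The plan is to start from the sum rule of Golinskii--Zlato\v s, which expresses the integral in \eqref{1.1} --- call it $Z_m$ --- as an explicit series in the Verblunsky coefficients. That series is a finite linear combination of sums $\sum_n M(\alpha_{n+j_1},\dots,\bar\alpha_{n+k_1},\dots)$, where each $M$ is a monomial in the coefficients and their conjugates at finitely many shifted indices, of total degree at most $2(m+1)$, together with the Szeg\H o-type term $\sum_n \log(1-\lvert\alpha_n\rvert^2)$. The goal is to show that, modulo finite corrections, $Z_m$ is comparable to a single diagonal sum $\sum_n P_m(\alpha_n,\bar\alpha_n)$ for a fixed real polynomial $P_m$, and then to read off the relevant $\ell^p$ space from the lowest-order behavior of $P_m$.

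The first main step is a slow-variation reduction using the hypothesis $\delta\alpha \in \ell^2$. Since $\delta\alpha \in \ell^2$ forces $\delta^k \alpha \in \ell^2$ for every $k \ge 1$, I would expand each shifted coefficient as $\alpha_{n+j} = \alpha_n + \sum_{0 \le i < j} (\delta\alpha)_{n+i}$ inside every monomial. The fully diagonal part of this expansion produces $\sum_n P_m(\alpha_n,\bar\alpha_n)$, where $P_m$ is obtained by collapsing all shifts to the index $n$; every remaining term then carries at least one factor of $\delta\alpha$. Terms with two or more difference factors are absolutely summable by Cauchy--Schwarz, using $\delta^k\alpha \in \ell^2$ and the boundedness $\lvert\alpha_n\rvert < 1$. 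The delicate terms are those with exactly one difference factor and $d-1$ plain factors: these are bounded by $\lVert\delta\alpha\rVert_2\,\bigl(\sum_n \lvert\alpha_n\rvert^{2(d-1)}\bigr)^{1/2}$, hence finite provided $\alpha \in \ell^{2(d-1)}$.

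This conditional finiteness is the main obstacle, since the estimates controlling the cross terms are themselves contingent on the very $\ell^p$ membership that the theorem characterizes. I would resolve it by organizing the argument around the lowest nonvanishing homogeneous part of $P_m$. Writing $2l$ for its degree, one has $1 \le l \le m+1$ from the degree bound on the sum rule (and $l \ge 1$ because $Z_m$ vanishes to second order as $\alpha \to 0$). In the regime where either side of the claimed equivalence can hold one has $\alpha_n \to 0$: if $Z_m > -\infty$ the diagonal sum converges so its terms tend to $0$, while conversely $\alpha \in \ell^{2l}$ gives decay directly, and the complementary case $\alpha_n \not\to 0$ makes both conditions fail. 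For such decaying $\alpha$, membership in $\ell^{2l}$ implies $\alpha \in \ell^{2(d-1)}$ for every degree $d \ge 2l$ appearing in the formula, so all cross terms are simultaneously finite and the single-difference estimate closes without circularity. It then remains to show that the leading homogeneous part is negative definite at its order, $P_m^{(2l)}(\alpha,\bar\alpha) \asymp -\lvert\alpha\rvert^{2l}$; granting this, $\sum_n P_m(\alpha_n,\bar\alpha_n)$ is finite if and only if $\sum_n \lvert\alpha_n\rvert^{2l} < \infty$, which is exactly $\alpha \in \ell^{2l}$.

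The crux is thus the sign and nondegeneracy of the leading diagonal term, and this is where I expect the real work to lie. Rather than computing $P_m$ explicitly --- the manipulation that becomes prohibitively complicated beyond $\ell^6$ in earlier work --- for Theorem~\ref{T1.4} I would only need to locate \emph{some} order $2l \le 2(m+1)$ at which $P_m$ is negative definite, which is substantially weaker than identifying $l$ exactly. Pinning down $l = m+1$, and hence upgrading Theorem~\ref{T1.4} to Theorem~\ref{T1.1}, is precisely the task I would hand to the test-sequence method: evaluating the sum rule on explicit slowly-varying trial sequences such as $\alpha_n = c\,n^{-\beta}$ to detect the exact order at which the diagonal term first fails to vanish.
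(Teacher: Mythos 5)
Your architecture matches the paper's in outline (reduce the Golinskii--Zlato\v s sum rule, modulo convergent errors, to a diagonal series in $\lvert\alpha_n\rvert^2$ and read $l$ off the first nonvanishing coefficient), but there is a genuine gap exactly where the paper does its real work: the single-difference cross terms. After expanding $\alpha_{n+j}=\alpha_n+\sum_i(\delta\alpha)_{n+i}$, you bound a term with one difference factor and $d-1$ plain factors by $\lVert\delta\alpha\rVert_2\bigl(\sum_n\lvert\alpha_n\rvert^{2(d-1)}\bigr)^{1/2}$ and propose to close the circularity by noting that $\alpha\in\ell^{2l}$ implies $\alpha\in\ell^{2(d-1)}$ for degrees $d\ge 2l$. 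This leaves all monomials of degree $d<2l$ uncontrolled, and those are always present: already for $m=1$ the sum rule contains $\Re\,\alpha_{n+1}\bar\alpha_n$ (degree $2$), the conclusion is $\alpha\in\ell^4$, and $\ell^4\not\subset\ell^2$, so the Cauchy--Schwarz bound $\lVert\delta\alpha\rVert_2\lVert\alpha\rVert_2$ for $\sum_n(\delta\alpha)_n\bar\alpha_n$ is unavailable. The paper's Lemma~\ref{L2.1} never produces terms linear in $\delta\alpha$: since only the real part of the balanced products $\beta_n\bar\beta'_n$ (with $\beta_n=\prod_j\alpha_{n+t_{2j-1}}$, $\beta'_n=\prod_j\alpha_{n+t_{2j}}$) enters, it uses $2\Re\beta_n\bar\beta'_n=\lvert\beta_n\rvert^2+\lvert\beta'_n\rvert^2-\lvert\beta_n-\beta'_n\rvert^2$ together with Lemma~\ref{L2.2}, so every error is either quadratic in $\delta\alpha$ (summable by $\delta\alpha\in\ell^2$ alone) or a conditionally convergent telescoping sum (needing only $\alpha_n\to0$). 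That quadratic identity is the missing idea; without it the reduction to the diagonal series does not go through under the stated hypotheses.

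Two further points. First, you defer the sign and nondegeneracy of the leading diagonal coefficient as the ``crux,'' but in the paper this step is short: after the reduction, $f(\alpha_n)=b_0\log\rho_n+\sum_{k=1}^m d_k\lvert\alpha_n\rvert^{2k}$ is automatically a power series in $\lvert\alpha_n\rvert^2$; its coefficient $c_{m+1}=-b_0/(2(m+1))$ is nonzero because the polynomial part stops at order $m$ while $\log\rho_n$ contributes at every order (this, not a ``degree bound,'' is why $l\le m+1$); and the first nonzero $c_l$ must be negative because $c_l>0$, applied to any test sequence with $\alpha\notin\ell^{2l}$, $\delta\alpha\in\ell^2$, $\alpha_n\to0$, would force $Z(\mu)=+\infty$, contradicting the a priori upper bound of Remark~\ref{R1.1}. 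Second, your route to $\alpha_n\to0$ from \eqref{1.1} (``the diagonal sum converges so its terms tend to $0$'') is circular, since the comparison with the diagonal sum itself requires $\alpha_n\to0$ for the telescoping; the paper obtains $\alpha_n\to0$ from Rakhmanov's theorem.
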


It then remains to show that $l=m+1$, for which we use a new test sequence approach. We rely on the following result about the asymptotic behavior of the a.c.\ part of the measure near the critical point.

\begin{thm}\label{T1.5}
Assume that $\delta\alpha \in \ell^1$. If $\alpha \in \ell^p$ for some $p\in\mathbb{N}$, then $\supp \mu_\s \subset\{1\}$, $w(\theta)$ is strictly positive and continuous on $e^{i\theta} \in \partial\mathbb{D} \setminus \{1\}$, and
\[
\lvert \log w(\theta) \rvert = O \left( \frac 1{\lvert \theta \rvert^{p-1}}\right), \quad \theta \to 0.
\]
\end{thm}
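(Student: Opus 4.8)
The plan is to prove Theorem~\ref{T1.5} by analyzing the Szeg\H{o} function associated to $\mu$ and using the decay of the Verblunsky coefficients to control its boundary behavior near $\theta=0$. Under the hypothesis $\delta\alpha\in\ell^1$ together with $\alpha\in\ell^p$, one expects $\alpha_n\to 0$, so by a standard result the essential support of the measure is all of $\partial\mathbb{D}$ and the absolutely continuous part dominates away from any critical point. First I would establish that $\supp\mu_\s\subset\{1\}$ and that $w$ is strictly positive and continuous on $\partial\mathbb{D}\setminus\{1\}$; this should follow from the method of \cite{Lukic1}, which typically produces $\ell^1$ or bounded-variation control on suitable transforms of $\alpha$ and thereby prevents singular spectrum and forces continuity of $w$ away from the single accumulation point of the $\alpha$-dynamics, namely $\theta=0$.

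The heart of the matter is the quantitative estimate
\[
\lvert \log w(\theta)\rvert = O\bigl(\lvert\theta\rvert^{-(p-1)}\bigr),\qquad \theta\to 0.
\]
The natural route is to represent $\log w(\theta)$ (or a renormalized Szeg\H{o}-type function) through the Verblunsky coefficients, for instance via a product formula or a transfer-matrix analysis of the recursion
\[
\varphi_{n+1}(z)=\frac{1}{\sqrt{1-\lvert\alpha_n\rvert^2}}\Bigl(z\varphi_n(z)-\bar\alpha_n z^n\overline{\varphi_n(1/\bar z)}\Bigr),
\]
and then to track how the boundary value at $e^{i\theta}$ degrades as $\theta\to 0$. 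The key analytic mechanism should be that $\alpha\in\ell^p$ controls the size of each coefficient, while the $\ell^1$-variation condition ensures the transfer matrices behave like those of an almost-constant recursion whose spectral parameter sits at a distance comparable to $\lvert\theta\rvert$ from the critical phase. Summing the per-step contributions, each of which is amplified by a factor of order $\lvert\theta\rvert^{-1}$ near the critical point, over the $\ell^p$ sequence should produce the claimed polynomial rate with exponent $p-1$; the loss of one power relative to $p$ reflects that one power of the decay is consumed in making the sum converge.

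The main obstacle I anticipate is making the transfer-matrix or product estimate uniform as $\theta\to 0$, since the naive bound on each factor blows up precisely at the critical point and the individual estimates must be combined without losing the summability coming from $\alpha\in\ell^p$. Concretely, one must show that the accumulated phase cancellations, guaranteed by $\delta\alpha\in\ell^1$, keep the partial products from growing faster than $\lvert\theta\rvert^{-(p-1)}$, rather than exponentially. I expect this to require a careful splitting of the index range into a regime $n\lesssim 1/\lvert\theta\rvert$, where one uses $\ell^p$ smallness directly, and a regime $n\gtrsim 1/\lvert\theta\rvert$, where oscillation and the variation bound deliver convergence; balancing the two at $n\sim 1/\lvert\theta\rvert$ is what should yield the exponent $p-1$. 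Once the logarithmic estimate is in place, strict positivity and continuity of $w$ on $\partial\mathbb{D}\setminus\{1\}$ follow immediately, and the absence of singular spectrum off $\{1\}$ is a byproduct of the same uniform control.
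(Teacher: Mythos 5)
Your overall framework points in the right direction, and the qualitative part of your plan matches the paper: the statements $\supp \mu_\s \subset \{1\}$ and positivity/continuity of $w$ off $\{1\}$ are indeed obtained by invoking the method of \cite{Lukic1} (via Pr\"ufer variables $\varphi_n(e^{i\eta}) = r_n(\eta) e^{i[n\eta + \theta_n(\eta)]}$ and the identity $\log w = -2\lim_{n\to\infty}\log r_n$), and the quantitative bound comes from tracking how those estimates degrade as $\eta \to 0$. After expanding the logarithms in the recursion for $\log r_n$, everything reduces to controlling oscillatory sums of the form $\sum_n \alpha_n^I \bar\alpha_n^J e^{i(I-J)[(n+1)\eta + 2\theta_n]}$, which is the "per-step contribution" picture you describe.

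The genuine gap is in the mechanism you propose for extracting the exponent $p-1$. A single summation by parts --- which is what $\delta\alpha \in \ell^1$ buys you --- converts such a sum into a boundary term of size $O(\lvert\eta\rvert^{-1})$ plus a new sum whose summand carries one extra power of $\lvert\alpha_n\rvert$ (coming from $e^{2ik(\theta_{n+1}-\theta_n)}-1 = O(\lvert\alpha_n\rvert)$, since the Pr\"ufer phase increment is itself of order $\lvert\alpha_n\rvert$); after one step you are left with terms of degree $I+J+1$, which are absolutely summable only if that degree already reaches $p$. For general $p$ one must \emph{iterate}: each pass through the paper's Lemma~3.1 multiplies the prefactor by $g = f/(e^{-i(k\eta-\phi)}-1) = O(\lvert\eta\rvert^{-1})$ and raises the degree by one, stopping only when the degree reaches $p$, so the exponent $p-1$ is the accumulation of up to $p-1$ factors of $\lvert\eta\rvert^{-1}$ --- not the single amplification your accounting ("each of which is amplified by a factor of order $\lvert\theta\rvert^{-1}$") suggests, and one must verify by induction on the recurrences for the coefficients $f_{I,J,\cdot,\cdot}$ and $g_{I,J,\cdot,\cdot}$ that all prefactors generated along the way stay $O(\lvert\eta\rvert^{1-p})$. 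Your alternative of splitting the index range at $n \sim 1/\lvert\theta\rvert$ does not repair this: in the regime $n \lesssim 1/\lvert\theta\rvert$ H\"older gives only $O(\lvert\theta\rvert^{-(p-1)/p})$ for the degree-one terms, while in the regime $n \gtrsim 1/\lvert\theta\rvert$ a single summation by parts again leaves a tail $\sum_n \lvert\alpha_n\rvert^2(\cdots)$ that is not summable for $p>2$, so no balancing of the two regimes closes the argument without the iterative scheme. That iteration, together with the quantitative form of the summation-by-parts lemma, is the missing core of the proof.
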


Given these two results, the proof of Theorem~\ref{T1.1} is almost immediate.

\begin{proof}[Proof of Theorem~\ref{T1.1}]
Pick $\alpha$ to be a sequence of Verblunsky coefficients with $\delta \alpha \in \ell^1$ and $\alpha \in \ell^{2m+1} \setminus \ell^{2m}$; for instance, take
\begin{equation}\label{1.3}
\alpha_n = (n+2)^{- 1/ (2m)}.
\end{equation}
By Theorem~\ref{T1.5}, $\lvert \log w(\theta) \rvert = O( \lvert \theta \rvert^{-2m})$, so \eqref{1.1} holds.

By Theorem~\ref{T1.4}, \eqref{1.1} is equivalent to $\alpha \in \ell^{2l}$ for some $l\in \mathbb{N}$ with $l \le m+1$. The measure corresponding to Verblunsky coefficients \eqref{1.3} obeys \eqref{1.1} but $\alpha \notin \ell^{2m}$, so $l> m$. Thus, $l=m+1$.
\end{proof}

We think of Theorem~\ref{T1.1} as a first application of this test sequence approach and hope that this approach can be used to make further progress on higher-order Szeg\H o theorems.

Theorem~\ref{T1.5} is related to Weidmann's theorem \cite{Weidmann67,PeherstorferSteinbauer00}, which states that $\delta\alpha \in \ell^1$ and $\lim_{n\to\infty} \alpha_n = 0$ imply that $\supp \mu_s \subset \{1\}$ and $w(\theta)$ is continuous and strictly positive on $\partial\mathbb{D}\setminus\{1\}$.  Theorem~\ref{T1.5} describes the asymptotic behavior at the critical point $\theta=0$ in terms of decay of $\alpha$, providing a sort of refinement of Weidmann's theorem. In a sense, Theorem~\ref{T1.5} is to Weidmann's theorem what Theorem~\ref{T1.1} is to Theorem~\ref{T1.2}.

Regarding our discussion around Conjecture~\ref{C1.3}, we should note that \cite{OPUC1} contains a more general conjecture than what we cited, with $(1-\cos\theta)^m$ replaced by an arbitrary nontrivial nonnegative trigonometric polynomial. In fact, \cite{SimonZlatos05,GolinskiiZlatos07} contain partial results towards the more general conjecture. However, in \cite{Lukic7}, we have disproved the general conjecture, proposing also a modification of the conjecture. The phenomenon that breaks the general conjecture depends on multiple zeros of the trigonometric polynomial, so it has no bearing to the cases in this paper.

In Section~\ref{S2}, we prove Theorem~\ref{T1.4}. In Section~\ref{S3}, we prove Theorem~\ref{T1.5}, using the method of \cite{Lukic1}.

Since the proof of Theorem~\ref{T1.1} only uses real-valued, power-law decaying test sequences \eqref{1.3}, a referee has pointed out that for such sequences, one can use Geronimus relations and results of Kreimer--Last--Simon~\cite{KreimerLastSimon09} to obtain exact asymptotics of $\log w(\theta)$ as $\theta \to 0$, thereby circumventing Theorem~\ref{T1.5}. We explain this approach in Section~\ref{S4}.

If, in the future, this test sequence approach is successfully implemented to higher-order Szeg\H o theorems with multiple critical points, the natural test sequences to use will be Wigner--von Neumann type potentials, of the type studied in \cite{Lukic1}. Theorem~\ref{T1.5} can easily be extended to the full class of potentials studied in \cite{Lukic1}, but only gives one-sided estimates of the asymptotic behavior at critical points. The alternative approach gives exact asymptotics for \eqref{1.3}, but the method of \cite{KreimerLastSimon09} has not been extended to more general Wigner--von Neumann type potentials. Therefore, at the current state of knowledge, we believe that it is of interest to have both approaches explained in this paper. We consider it an interesting open problem to extend the analysis of \cite{KreimerLastSimon09} to more general Wigner--von Neumann type potentials, which have one or more critical points inside the essential spectrum, and to understand the asymptotic behavior of the a.c.\ part of the measure near those critical points as well as near the endpoints $\pm 2$ of the essential spectrum. 

We are grateful to the referees for useful comments and, especially, for pointing out the alternative approach.

\section{Proof of Theorem~\ref{T1.4}} \label{S2}

Let us begin by noting that the trigonometric polynomial
\[
(1-\cos \theta)^m = \sum_{k=-m}^m b_k e^{-ik\theta}
\]
has $b_{-k} = b_k \in \mathbb{R}$ since it is even and real-valued, and
\[
b_0 = \int_0^{2\pi} (1-\cos \theta)^m \frac{d\theta}{2\pi} > 0.
\]
To investigate finiteness of the integral
\[
Z(\mu) = \int (1-\cos \theta)^m \log w(\theta) \frac{d\theta}{2\pi},
\]
we use a formula of Golinskii--Zlato\v s~\cite[Theorem 3.3]{GolinskiiZlatos07},
\begin{equation}\label{2.1}
Z(\mu) = \sum_{n=0}^\infty g(\alpha_{n-m}, \dots, \alpha_{n})
\end{equation}
where $g$ is given by
\begin{equation}\label{2.2}
g(\alpha_{n-m}, \dots, \alpha_{n}) = \Re \left( b_0 \log (1-\lvert\alpha_n\rvert^2) + \sum_{l=1}^m  \sum_{k=1}^l \sum_{t \in S_{l,k} } b_l N(t) \prod_{j=1}^k \alpha_{n-t_{2j-1}} \bar \alpha_{n-t_{2j}} \right).
\end{equation}
In this formula, we use the standard convention that $\alpha_{-1}=-1$ and $\alpha_n =0$ for $n\le -2$. 
$S_{l,k} \subset \{0,1,\dots, l\}^{2k}$ is a set of $2k$-tuples, and $N(t)$ are real-valued constants; \cite{GolinskiiZlatos07} gives descriptions of the sets and constants, which we do not repeat since we won't need such detailed information. All we need is that, apart from the logarithmic term, this is a finite sum of $k$-fold products with $k\le m$, which are multiplied by real constants.

We now set out to prove that we can replace each $k$-fold product in \eqref{2.2} by $\lvert \alpha_n \rvert^{2k}$ with a finite error on $Z(\mu)$. The motivation is simple: if $\delta \alpha$ is small, then the errors introduced by replacing $\alpha_{n+t_i}$ by $\alpha_n$ will also be small. Indeed, if we knew that $\delta \alpha \in \ell^1$, this would be trivial, since such replacements would introduce $\ell^1$ errors in \eqref{2.2}. Using only $\delta \alpha \in \ell^2$, this turns out to still be true, but the argument is more involved and uses the fact that \eqref{2.2} depends only on the real part of the product. It also uses a telescoping argument, so the error will be summable but not necessarily absolutely summable. 

\begin{lemma}\label{L2.1}
Let $t \in  \{0,1,\dots, l \}^{2k}$. Then for any $\alpha \in \mathbb{D}^\infty$ which obeys $\delta\alpha \in \ell^2$ and $\lim_{n\to\infty} \alpha_n =0$, the series 
\begin{equation}\label{2.3}
\sum_{n=0}^\infty \left( \Re \prod_{j=1}^k \alpha_{n-t_{2j-1}} \bar \alpha_{n-t_{2j}} -  \lvert \alpha_n \rvert^{2k}  \right)
\end{equation}
is convergent.
\end{lemma}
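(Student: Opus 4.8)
The plan is to expand the product and organize the error according to its degree in the increments $d_n^{(s)} := \alpha_{n+s} - \alpha_n$. Since all shifts $t_i$ lie in $\{0,1,\dots,l\}$, every increment obeys $\lvert d_n^{(s)}\rvert \le \beta_n := \sum_{i=n}^{n+l-1} \lvert (\delta\alpha)_i\rvert$, and $\beta$ is a finite sum of shifts of $\lvert\delta\alpha\rvert$, hence $\beta \in \ell^2$. Writing $\alpha_{n+t_{2j-1}} = \alpha_n + d_n^{(t_{2j-1})}$ and $\bar\alpha_{n+t_{2j}} = \bar\alpha_n + \overline{d_n^{(t_{2j})}}$ and multiplying out, I would split
\[
\prod_{j=1}^k \alpha_{n+t_{2j-1}}\bar\alpha_{n+t_{2j}} = \lvert\alpha_n\rvert^{2k} + L_n + H_n,
\]
where $L_n$ collects the terms of degree $1$ in the increments and $H_n$ those of degree $\ge 2$. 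The term $H_n$ is harmless: every summand has at least two increment factors and at most $2k-2$ factors $\alpha_n$ or $\bar\alpha_n$, each of modulus $<1$, so $\lvert H_n\rvert \le C_k \beta_n^2$, which is in $\ell^1$; thus $\sum_n H_n$ converges absolutely and may be discarded.

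The linear term is the heart of the matter, and a direct absolute estimate fails, since $L_n$ is of size $\lvert\alpha_n\rvert^{2k-1}\beta_n$, which need not lie in $\ell^1$ under our hypotheses. The resolution uses that only the real part enters \eqref{2.2}. A short computation gives
\[
L_n = \lvert\alpha_n\rvert^{2k-2}\Bigl(\bar\alpha_n \sum_{j=1}^k d_n^{(t_{2j-1})} + \alpha_n \sum_{j=1}^k \overline{d_n^{(t_{2j})}}\Bigr),
\]
so that, using $\Re(\alpha_n \bar z) = \Re(\bar\alpha_n z)$,
\[
\Re L_n = \lvert\alpha_n\rvert^{2k-2}\sum_{i=1}^{2k}\Re\bigl(\bar\alpha_n(\alpha_{n+t_i}-\alpha_n)\bigr).
\]
I would then invoke the key identity
\[
\Re\bigl(\bar\alpha_n(\alpha_{n+t_i}-\alpha_n)\bigr) = \tfrac12\bigl(\lvert\alpha_{n+t_i}\rvert^2 - \lvert\alpha_n\rvert^2\bigr) - \tfrac12\lvert d_n^{(t_i)}\rvert^2,
\]
whose second term contributes an $O(\beta_n^2)\in\ell^1$ quantity. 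It therefore remains only to prove, for each fixed shift $s\in\{0,\dots,l\}$, that $\sum_n \lvert\alpha_n\rvert^{2k-2}\bigl(\lvert\alpha_{n+s}\rvert^2 - \lvert\alpha_n\rvert^2\bigr)$ converges.

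Writing $a_n = \lvert\alpha_n\rvert^2 \ge 0$, this is $\sum_n a_n^{k-1}(a_{n+s}-a_n)$. Telescoping the shift via $a_{n+s}-a_n = \sum_{p=0}^{s-1}(a_{n+p+1}-a_{n+p})$ and replacing $a_n^{k-1}$ by $a_{n+p}^{k-1}$ at the cost of a correction bounded by $C\lvert a_n - a_{n+p}\rvert\,\lvert a_{n+p+1}-a_{n+p}\rvert \le C'\beta_n\lvert(\delta\alpha)_{n+p}\rvert$ (an $\ell^1$ quantity, being a product of two $\ell^2$ sequences, since $\lvert a_{m}-a_{m'}\rvert \le 2\lvert\alpha_m-\alpha_{m'}\rvert$) reduces everything to sums $\sum_m a_m^{k-1}(a_{m+1}-a_m)$. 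For these I would use $a_m^{k-1}(a_{m+1}-a_m) = \tfrac1k(a_{m+1}^k - a_m^k) + R_m$ with $\lvert R_m\rvert \le (k-1)\lvert a_{m+1}-a_m\rvert^2$ (valid since $0\le a_m\le 1$): the main part telescopes and converges because $a_m\to 0$, while $\sum_m R_m$ converges absolutely since $\lvert a_{m+1}-a_m\rvert \le 2\lvert(\delta\alpha)_m\rvert$. Combining the three contributions gives the lemma.

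The main obstacle, as anticipated in the text preceding the statement, is precisely that the linear term $L_n$ is not absolutely summable under $\delta\alpha\in\ell^2$ alone; the indispensable step is the real-part identity, which converts the obstructing term $\Re(\bar\alpha_n(\alpha_{n+t_i}-\alpha_n))$ into a genuinely telescoping difference of $\lvert\alpha_n\rvert^2$ plus an $\ell^1$ remainder. Everything else is a routine bookkeeping of $\ell^1$ errors built from products of $\ell^2$ sequences.
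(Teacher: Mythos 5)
Your proof is correct, and while it rests on the same key insight as the paper's --- that only the real part must be controlled, so that polarization converts the dangerous first-order term into a telescoping difference of $\lvert\alpha\rvert^2$'s plus quadratic (hence $\ell^1$) errors --- the decomposition is genuinely different. The paper does not expand in increments at all: it applies polarization to the full $k$-fold products, writing $2\Re\beta_n\bar\beta'_n=\lvert\beta_n\rvert^2+\lvert\beta'_n\rvert^2-\lvert\beta_n-\beta'_n\rvert^2$ with $\beta_n=\prod_j\alpha_{n+t_{2j-1}}$, $\beta'_n=\prod_j\alpha_{n+t_{2j}}$, then invokes an inequality comparing a product $z_1\cdots z_k$ to the power mean $(z_1^k+\cdots+z_k^k)/k$ up to $O(\max\lvert z_i-z_j\rvert^2)$ (its Lemma~\ref{L2.2}(ii), proved by a mean-value argument) to replace $\lvert\beta_n\rvert^2$ by $\frac1k\sum_j\lvert\alpha_{n+t_{2j-1}}\rvert^{2k}$, and finishes with one telescoping step. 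You instead linearize in the increments $d_n^{(s)}$, apply polarization factor-by-factor via $\Re\bigl(\bar\alpha_n(\alpha_{n+t_i}-\alpha_n)\bigr)=\tfrac12(\lvert\alpha_{n+t_i}\rvert^2-\lvert\alpha_n\rvert^2)-\tfrac12\lvert d_n^{(t_i)}\rvert^2$, and then need an extra step the paper avoids, namely summing $a_n^{k-1}(a_{n+s}-a_n)$ with $a_n=\lvert\alpha_n\rvert^2$, which you handle correctly by a discrete chain rule $a_m^{k-1}(a_{m+1}-a_m)=\tfrac1k(a_{m+1}^k-a_m^k)+O(\lvert a_{m+1}-a_m\rvert^2)$; this plays exactly the role of the paper's power-mean lemma, in differential rather than symmetrized form. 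All your error estimates check out ($H_n=O(\beta_n^2)$, the swap $a_n^{k-1}\to a_{n+p}^{k-1}$ costing a product of two $\ell^2$ sequences, and $\lvert a_m-a_{m'}\rvert\le2\lvert\alpha_m-\alpha_{m'}\rvert$ on $\mathbb{D}$), and both arguments produce conditionally convergent tails only through the telescoping steps, as the paper warns. The paper's route is slightly more compact because symmetrizing the whole product collapses your three-stage bookkeeping into two inequalities; yours is more elementary and makes the mechanism (first-order term is the obstruction, real part kills it) more transparent.
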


This lemma is the main technical step in our proof. For its proof, we need some preliminary inequalities.

\begin{lemma}\label{L2.2}
\begin{enumerate}[(i)]
\item For any $z_1, \dots, z_k, z'_1, \dots, z'_k \in \overline{\mathbb{D}}$,
\[
\lvert z_1 \cdots z_k - z'_1 \cdots z'_k \rvert \le k
\max_{j \in \{1, \dots, k\}} \lvert z_j - z'_j \rvert.
\]

\item For any $z_1, \dots, z_k \in \overline{\mathbb{D}}$,
\begin{equation}\label{2.4}
\left\lvert  \frac{z_1^k + \dots + z_k^k}k  -  z_1 \cdots z_k \right\rvert \le (k-1)^2 \max_{i, j \in \{1,\dots, k\}} \lvert z_i - z_j \rvert^2.
\end{equation}
\end{enumerate}
\end{lemma}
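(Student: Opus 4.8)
Part (i) is a routine telescoping estimate: writing
\[
z_1 \cdots z_k - z'_1 \cdots z'_k = \sum_{i=1}^k z'_1 \cdots z'_{i-1} (z_i - z'_i) z_{i+1} \cdots z_k
\]
and using $\lvert z_j \rvert, \lvert z'_j \rvert \le 1$ to bound each of the $k$ summands by $\max_j \lvert z_j - z'_j\rvert$ gives the claim at once.

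The substance is in part (ii). Write $\epsilon = \max_{i,j}\lvert z_i - z_j\rvert$ and set $F(z_1, \dots, z_k) = \frac1k(z_1^k + \dots + z_k^k) - z_1 \cdots z_k$. The plan is a second-order Taylor expansion that exploits the fact that $F$ vanishes to second order along the diagonal $z_1 = \dots = z_k$. First I would record two facts: $F$ vanishes when all its arguments coincide, and so does its gradient, since $\partial_{z_i} F = z_i^{k-1} - \prod_{l \ne i} z_l$ equals $0$ on the diagonal. Then introduce the centroid $c = \frac1k \sum_j z_j$ and the path $z_i(s) = c + s d_i$ with $d_i = z_i - c$, $s \in [0,1]$, so that $\phi(s) := F(z_1(s), \dots, z_k(s))$ satisfies $\phi(0) = \phi'(0) = 0$ and $\phi(1) = F(z_1, \dots, z_k)$. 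Taylor's formula with integral remainder gives $\phi(1) = \int_0^1 (1-s)\phi''(s)\,ds$, whence $\lvert F\rvert \le \tfrac12 \max_{s} \lvert \phi''(s)\rvert$, and a direct Hessian computation yields
\[
\phi''(s) = (k-1) \sum_i d_i^2\, z_i(s)^{k-2} - \sum_{i\ne j} d_i d_j \prod_{l\ne i,j} z_l(s).
\]

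The one place where the constant $(k-1)^2$ is extracted is the bound on $S_2 := \sum_i \lvert d_i\rvert^2$. Since each $z_i(s)$ is a convex combination of points in $\overline{\mathbb{D}}$, every factor $z_i(s)^{k-2}$ and $\prod_{l\ne i,j} z_l(s)$ has modulus at most $1$; so with $S_1 = \sum_i \lvert d_i\rvert$ one gets $\lvert \phi''(s)\rvert \le (k-2)S_2 + S_1^2$, and Cauchy--Schwarz ($S_1^2 \le k S_2$) reduces this to $2(k-1)S_2$. The naive estimate $\lvert d_i\rvert \le \epsilon$, giving $S_2 \le k\epsilon^2$, is too weak; instead I would use that $c$ is the centroid, which gives the variance identity $S_2 = \sum_i \lvert z_i - c\rvert^2 = \frac1k \sum_{i<j} \lvert z_i - z_j\rvert^2 \le \frac{k-1}{2}\epsilon^2$. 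Combining, $\lvert \phi''(s)\rvert \le (k-1)^2 \epsilon^2$ and hence $\lvert F \rvert \le \tfrac12 (k-1)^2 \epsilon^2$, which is even stronger than \eqref{2.4}.

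The main obstacle is precisely this control of $S_2$: expanding around the centroid rather than around one of the $z_i$ is what converts the diameter constraint $\max_{i,j}\lvert z_i - z_j\rvert = \epsilon$ into the sharp bound on $\sum_i \lvert d_i\rvert^2$ through the variance identity. Centering elsewhere makes the gradient term at $s=0$ vanish just as well, but loses the factor of $2$ in the bound on $S_2$ and spoils the final constant; the centroid is exactly the choice that makes everything fit.
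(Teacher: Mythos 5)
Your part (i) is the same telescoping argument as the paper's. Part (ii) is correct but follows a genuinely different route. The paper works one variable at a time: it bounds $\partial P/\partial z_k = z_k^{k-1} - z_1\cdots z_{k-1}$ by $(k-1)\max_{i,j}\lvert z_i - z_j\rvert$ using part (i), applies the mean value theorem along the segment from $z_k$ to $z_1$ to replace $z_k$ by $z_1$ at a cost of $(k-1)\max\lvert z_i-z_j\rvert^2$, and repeats this $k-1$ times until it reaches $P(z_1,\dots,z_1)=0$, accumulating the constant $(k-1)^2$ additively. You instead do a single second-order Taylor expansion along the ray from the centroid, using that $F$ and its gradient both vanish on the diagonal, an explicit Hessian bound, and the variance identity $\sum_i\lvert z_i-c\rvert^2 = \frac1k\sum_{i<j}\lvert z_i-z_j\rvert^2 \le \frac{k-1}{2}\epsilon^2$; I checked the Hessian formula, the bound $\lvert\phi''\rvert\le(k-2)S_2+S_1^2\le 2(k-1)S_2$, and the variance identity, and all are right (note also that each $z_i(s)$ stays in $\mathbb{D}$ by convexity, so the modulus-one bounds on the monomial factors are legitimate). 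Your approach is slightly more machinery up front but pays off with the sharper constant $\tfrac12(k-1)^2$, which is attained at $k=2$; the paper's iterated replacement is more elementary and only needs part (i) plus the one-variable mean value theorem. Either constant is more than sufficient for the application in Lemma~\ref{L2.1}, where only finiteness of the resulting $\ell^1$ bound matters.
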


\begin{proof}
(i) This is immediate from the telescoping sum
\[
z_1 \cdots z_k - z'_1 \cdots z'_k = \sum_{j=1}^{k} (z_j - z'_j) \prod_{i=1}^{j-1} z_i  \prod_{l=j+1}^{k} z'_l.
\]

(ii) Denote
\[
P(z_1, \dots, z_k) = \frac{z_1^k + \dots + z_k^k}k -  z_1 \cdots z_k.
\]
We use (i) to estimate a partial derivative of $P$,
\begin{align*}
\left\lvert \frac{ \partial P}{\partial z_k} \right\rvert  & = \lvert z_k^{k-1} -  z_1 \cdots z_{k-1} \rvert  \le(k-1)  \max_{i,j \in \{1, \dots ,k\}} \lvert z_i - z_j \rvert.
\end{align*}
Note that the right-hand side doesn't increase if we replace $z_k$ by a value between its previous value and $z_1$. Thus, using the mean value theorem over an interval of length $\lvert z_k - z_1 \rvert$,
\[
\lvert P(z_1, \dots, z_{k-1} ,z_k) - P(z_1, \dots, z_{k-1}, z_1 ) \rvert \le (k-1)  \max_{i,j \in \{1, \dots ,k\}} \lvert z_i - z_j \rvert^2.
\]
Using this step $k-2$ more times to replace $z_{k-1}, \dots ,z_2$ by $z_1$, using the triangle inequality, and noting that $P(z_1, \dots ,z_1) = 0$, we conclude \eqref{2.4}.
\end{proof}

\begin{proof}[Proof of Lemma~\ref{L2.1}]
Since $t_i, t_j \in \{0, \dots, l \}$,
\begin{equation}\label{2.5}
\lvert \alpha_{n-t_{i}} - \alpha_{n-t_{j}} \rvert^2 \le \left( \sum_{q=0}^{l-1} \lvert \alpha_{n-q-1} - \alpha_{n-q} \rvert \right)^2 \le l  \sum_{q=0}^{l-1} \lvert \alpha_{n-q-1} - \alpha_{n-q} \rvert^2.
\end{equation}
Denote $\beta_n = \prod_{j=1}^k \alpha_{n-t_{2j-1}}$ and $\beta'_n = \prod_{j=1}^k \alpha_{n-t_{2j}}$. By Lemma~\ref{L2.2}(i) and \eqref{2.5},
\begin{equation}\label{2.6}
 \lvert\beta_n\rvert^2 +  \lvert \beta'_n \rvert^2 - 2 \Re \beta_n \bar\beta'_n = \lvert \beta_n - \beta'_n\rvert^2  \le (k-1)^2 l \sum_{q=0}^{l-1} \lvert \alpha_{n-q-1} - \alpha_{n-q} \rvert^2.
\end{equation}
By Lemma~\ref{L2.2}(ii) applied to $z_j = \lvert \alpha_{n-t_{2j}}\rvert^2$,
\[
\left \lvert \lvert \beta_n \rvert^2 -  \frac 1k \sum_{j=1}^k \lvert \alpha_{n-t_{2j}} \rvert^{2k} \right\rvert  \le (k-1)^2 \max_{i, j\in \{1,\dots,k\}} \left \lvert \lvert \alpha_{n-t_{2i}} \rvert^2 - \lvert \alpha_{n-t_{2j}} \rvert^2  \right\rvert^2.
\]
Since $\lvert\lvert z \rvert^2 - \lvert w \rvert^2 \rvert \le 2 \lvert z -w\rvert$ for $z,w \in \mathbb{D}$, using \eqref{2.5} we conclude
\begin{equation}\label{2.7}
\left \lvert \lvert \beta_n \rvert^2 -  \frac 1k \sum_{j=1}^k \lvert \alpha_{n-t_{2j}} \rvert^{2k} \right\rvert \le 4 (k-1)^2 l \sum_{q=0}^{l-1} \lvert \alpha_{n-q-1} - \alpha_{n-q} \rvert^2
\end{equation}
The analogous result holds for $\lvert\beta_n' \rvert^2$ by the same proof. Combining those with \eqref{2.6}, we conclude that
\begin{equation}\label{2.8}
\sum_{n=0}^\infty \left \lvert \Re  \prod_{j=1}^k \alpha_{n-t_{2j-1}} \bar \alpha_{n-t_{2j}} - \frac 1{2k} \sum_{i=1}^{2k} \lvert \alpha_{n-t_{i}} \rvert^{2k} \right\rvert \le \frac 92 (k-1)^2 l^2 \lVert \delta\alpha \rVert_2^2.
\end{equation}
It remains to replace each $t_i$ by $0$. It is in this step that we lose absolute convergence. Since $\alpha_n \to 0$, the following is a convergent telescoping series:
\[
 \sum_{n=0}^\infty ( \lvert \alpha_{n-t_{i}} \rvert^{2k} - \lvert \alpha_{n} \rvert^{2k} )  = \sum_{n=0}^{t_i - 1} \lvert \alpha_{n-t_i} \rvert^{2k} = 1.
\]
Combining this with \eqref{2.8} completes the proof.
\end{proof}

In the next lemma we apply this to the function $g$ of \eqref{2.2}.

\begin{lemma}\label{L2.3} There exists a power series in $\lvert \alpha_n \rvert^2$ convergent on $\mathbb{D}$,
\begin{equation}\label{2.9}
f(\alpha_n) = \sum_{k=1}^\infty c_k \lvert\alpha_n\rvert^{2k},
\end{equation}
such for any $\alpha \in \mathbb{D}^\infty$ which obeys $\delta\alpha \in \ell^2$ and $\lim_{n\to\infty} \alpha_n =0$, the series
\begin{equation}\label{2.10}
\sum_{n=0}^\infty \left( g(\alpha_{n-m}, \dots, \alpha_{n}) - f(\alpha_{n}) \right)
\end{equation}
is convergent. Moreover, $c_{m+1} < 0$.
\end{lemma}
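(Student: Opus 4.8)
The plan is to separate $g$ into its logarithmic term and its polynomial part, treat each with the tools already assembled, and then read off the coefficients $c_k$. Since $b_0$, $\rho_n$, and all the constants $b_l N(t)$ are real, the real part in \eqref{2.2} distributes over the sum, giving
\[
g(\alpha_n, \dots, \alpha_{n+m}) = b_0 \log \rho_n + \sum_{l=1}^m \sum_{k=1}^l \sum_{t \in S_{l,k}} b_l N(t)\, \Re \prod_{j=1}^k \alpha_{n+t_{2j-1}} \bar \alpha_{n+t_{2j}}.
\]
The logarithmic term is already a function of $\lvert \alpha_n \rvert^2$: because $\rho_n = \sqrt{1 - \lvert \alpha_n \rvert^2}$, it has the expansion
\[
b_0 \log \rho_n = \frac{b_0}{2} \log(1 - \lvert \alpha_n \rvert^2) = -\frac{b_0}{2} \sum_{k=1}^\infty \frac{\lvert \alpha_n \rvert^{2k}}{k},
\]
a power series in $\lvert \alpha_n \rvert^2$ convergent on $\mathbb{D}$.

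Next I would define $f$ by keeping this logarithmic power series and replacing each product in the polynomial part by $\lvert \alpha_n \rvert^{2k}$,
\[
f(\alpha_n) = -\frac{b_0}{2} \sum_{k=1}^\infty \frac{\lvert \alpha_n \rvert^{2k}}{k} + \sum_{l=1}^m \sum_{k=1}^l \sum_{t \in S_{l,k}} b_l N(t)\, \lvert \alpha_n \rvert^{2k}.
\]
This is of the form \eqref{2.9} and is convergent on $\mathbb{D}$, since the polynomial part is a finite sum and the logarithmic part has radius of convergence $1$ in $\lvert \alpha_n \rvert^2$. With this choice the logarithmic contributions to $g$ and $f$ cancel exactly, so $g(\alpha_n, \dots, \alpha_{n+m}) - f(\alpha_n)$ is precisely the finite linear combination
\[
\sum_{l=1}^m \sum_{k=1}^l \sum_{t \in S_{l,k}} b_l N(t) \left( \Re \prod_{j=1}^k \alpha_{n+t_{2j-1}} \bar \alpha_{n+t_{2j}} - \lvert \alpha_n \rvert^{2k} \right).
\]
Convergence of \eqref{2.10} is then immediate: summed over $n$, this is a finite combination of the series shown to converge in Lemma~\ref{L2.1}.

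Finally I would identify $c_{m+1}$. The key structural point is that every product appearing in \eqref{2.2} has $k \le l \le m$, so the polynomial part of $f$ contributes only to coefficients $c_k$ with $k \le m$; the coefficient $c_{m+1}$ therefore receives a contribution from the logarithmic term alone, giving $c_{m+1} = -\tfrac{b_0}{2(m+1)}$, which is strictly negative because $b_0 > 0$. I do not expect a genuine obstacle in this lemma, as the hard analytic work is packaged into Lemma~\ref{L2.1}; the only point requiring care is the sign of $c_{m+1}$, and this is forced once one notices that at order $k=m+1$ only the Szeg\H o logarithmic term survives, its leading constant $b_0$ being the (positive) average of $(1-\cos\theta)^m$.
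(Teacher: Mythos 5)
Your proposal is correct and follows essentially the same route as the paper: apply Lemma~\ref{L2.1} to each $k$-fold product (using that the constants $b_l N(t)$ are real so $\Re$ distributes), absorb the resulting powers $\lvert\alpha_n\rvert^{2k}$ with $k\le m$ into $f$ together with the expansion of $b_0\log\rho_n$, and read off $c_{m+1}=-b_0/(2(m+1))<0$ from the logarithmic term alone. The only difference is cosmetic: you write the coefficients $d_k$ explicitly as sums of $b_l N(t)$, whereas the paper leaves them as unspecified reals.
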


\begin{proof}
Since $b_l N(t) \in \mathbb{R}$, we can apply Lemma~\ref{L2.1} to compare any $k$-fold product in \eqref{2.2} to $\lvert \alpha_n \rvert^{2k}$. We then collect same powers of $\lvert \alpha_n\rvert$ to compare the entire sum in \eqref{2.2} with
\[
\sum_{k=1}^m d_k \lvert \alpha_n \rvert^{2k}
\]
for some $d_1, \dots, d_m \in \mathbb{R}$. This proves that \eqref{2.10} is convergent with
\[
f(\alpha_n) =  b_0  \log (1 - \lvert \alpha_n \rvert^2)  + \sum_{k=1}^m d_k \lvert \alpha_n \rvert^{2k}.
\]
The proof is completed by using $b_0 > 0$ and
\[
 \log (1 - \lvert \alpha_n \rvert^2) = - \sum_{k=1}^\infty \frac{1}{k} \lvert \alpha_n \rvert^{2k}. \qedhere
\]
\end{proof}

\begin{proof}[Proof of Theorem~\ref{T1.4}]
If \eqref{1.1} holds, then $w>0$ for Lebesgue-a.e.\ $\theta$ so, by Rakhmanov's theorem \cite{Rakhmanov83},
\begin{equation}\label{2.11}
\lim_{n\to\infty} \alpha_n = 0.
\end{equation}
If $\alpha \in \ell^{2l}$, \eqref{2.11} holds trivially. Thus, both sides of the equivalence imply \eqref{2.11}, so we may work under that assumption.

Let $l$ be given by the condition that $c_l$ is the first non-zero coefficient in the series \eqref{2.9}. Since $c_{m+1} \neq 0$, it follows that $l\le m+1$.

We first prove by contradiction that $c_l<0$. Assume $c_l>0$. Since
\begin{equation}\label{2.12}
\lim_{n\to \infty} \frac{ f(\alpha_n) }{ c_l \lvert \alpha_n\rvert^{2l} } = 1,
\end{equation}
for any sequence $\alpha \notin \ell^{2l}$ with $\delta \alpha \in \ell^2$ and \eqref{2.11}, \eqref{2.10} would imply $Z(\mu) = + \infty$, which contradicts Remark~\ref{R1.1}.

Thus, $c_l < 0$. Using \eqref{2.12} and applying the limit comparison test again, together with \eqref{2.10}, implies that $Z(\mu) > -\infty$ is equivalent to  $\alpha \in \ell^{2l}$.
\end{proof}

\section{Proof of Theorem~\ref{T1.5}} \label{S3}

We will explain how Theorem~\ref{T1.5} follows from estimates implicit in \cite{Lukic1}, summarizing the method but referring to that paper for details.

The proof uses Pr\" ufer variables for the unit circle, which are defined for $z=e^{i\eta}$ with $\eta\in\mathbb{R}$ by $r_n(\eta) > 0$, $\theta_n(\eta) \in\mathbb{R}$, and
\begin{equation}\label{3.1}
\varphi_n(e^{i\eta}) = r_n(\eta) e^{i[n\eta+\theta_n(\eta)]}.
\end{equation}
A special case of the result in \cite{Lukic1} is that, under the assumptions of Theorem~\ref{T1.5}, $\log r_n(\eta)$ converges as $n\to \infty$ uniformly on intervals away from $\eta=0$. Thus, by Bernstein--Szeg\H o approximations,
\begin{equation}\label{3.2}
\log w(\eta) = - 2 \lim_{n\to\infty} \log r_n(\eta), \quad \eta \in (0,2\pi).
\end{equation}
To characterize the asymptotic behavior of $w$ as $\eta \to 0$, we must take a closer look at the method in \cite{Lukic1}. The estimates used there to prove existence of the limit \eqref{3.2}  also give  a bound on the limit.

The Szeg\H o recursion relation implies a first order recurrence relation for the Pr\"ufer variables,
\begin{equation}\label{3.3}
\log \frac{r_{n+1}}{r_n} + i  (\theta_{n+1}-\theta_n)  =  \log(1 - \bar\alpha_n e^{-i[(n+1)\eta + 2 \theta_n]}) - \tfrac 12 \log(1-\lvert \alpha_n \rvert^2).
\end{equation}
Taking the real part, summing in $n$, and using $r_0=1$, we investigate convergence of
\begin{equation}\label{3.4}
\log r_N = \Re \sum_{n=0}^{N-1} \left( \log(1 - \bar\alpha_n e^{-i[(n+1)\eta + 2 \theta_n]}) - \tfrac 12 \log(1-\lvert \alpha_n \rvert^2) \right)
\end{equation}
as $N\to\infty$. By expanding the logs in \eqref{3.4} and ignoring terms of order $O(\lvert \alpha_n \rvert^p)$ (which are summable uniformly in $\eta$ since $\alpha \in \ell^p$), the goal becomes to control a finite linear combination of sums of the form
\begin{equation}\label{3.5}
\sum_{n=0}^{N-1} \alpha_n^I \bar\alpha_n^J e^{i(I-J)[(n+1)\eta + 2 \theta_n]}
\end{equation}
with $I-1 \ge J \ge 0$ and $I+J < p$. We control such terms by a more quantitative version of \cite[Lemma 6.1]{Lukic1}.

\begin{lemma}\label{L3.1}
Let $k\in\mathbb{Z}$ and $\phi\in [0,2\pi)$, with $k$ and $\phi$ not both equal to $0$. If $\{e^{i\phi n} \Gamma_n\}_{n=0}^\infty$ has bounded variation and $\Gamma_n \to 0$, and functions $f(\eta)$, $g(\eta)$ are such that
\[
g(\eta) = \frac{f(\eta)}{e^{-i(k\eta -\phi)}-1},
\]
then the series
\[
S =  \sum_{n=0}^{\infty} \left( f(\eta) \Gamma_n e^{ik[(n+1)\eta+2\theta_n]}  - g(\eta)  \Gamma_n e^{ik[(n+1)\eta+2\theta_n]} \left( e^{2ik(\theta_{n+1}-\theta_n)}-1\right) \right) 
\]
is convergent and
\[
\lvert S \rvert \le 2 \lvert g(\eta)\rvert  \sum_{n=0}^{\infty} \lvert e^{i\phi}  \Gamma_{n+1}  - \Gamma_{n} \rvert.
\]
\end{lemma}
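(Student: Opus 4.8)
The plan is to prove the lemma by summation by parts, exploiting the fact that the subtracted term is engineered precisely to cancel the non-summable part of the first term. Throughout, write $E_n = e^{ik[(n+1)\eta+2\theta_n]}$, so that $\lvert E_n\rvert = 1$ and the summand defining $S$ reads $f(\eta)\Gamma_n E_n - g(\eta)\Gamma_n E_n\bigl(e^{2ik(\theta_{n+1}-\theta_n)}-1\bigr)$.

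First I would record the elementary recursion for $E_n$. Directly from its definition,
\[
E_{n+1} = E_n\, e^{ik\eta}\, e^{2ik(\theta_{n+1}-\theta_n)},
\]
which rearranges to
\[
E_n\bigl(e^{2ik(\theta_{n+1}-\theta_n)}-1\bigr) = e^{-ik\eta}E_{n+1} - E_n.
\]
Substituting this into the subtracted piece of the summand, and substituting $f(\eta) = g(\eta)\bigl(e^{-i(k\eta-\phi)}-1\bigr)$ into the first piece, the two terms proportional to $g(\eta)\Gamma_n E_n$ cancel exactly and the summand collapses to
\[
g(\eta)e^{-ik\eta}\bigl(e^{i\phi}\Gamma_n E_n - \Gamma_n E_{n+1}\bigr).
\]
This cancellation is the heart of the argument, and it is precisely what the denominator $e^{-i(k\eta-\phi)}-1$ in the definition of $g$ is designed to produce.

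Next I would perform Abel summation on $\sum_{n=0}^{N-1}\bigl(e^{i\phi}\Gamma_n E_n - \Gamma_n E_{n+1}\bigr)$. Shifting the index in the second sum and collecting terms gives
\[
\sum_{n=0}^{N-1}\bigl(e^{i\phi}\Gamma_n E_n - \Gamma_n E_{n+1}\bigr) = e^{i\phi}\Gamma_0 E_0 + \sum_{n=1}^{N-1}(e^{i\phi}\Gamma_n-\Gamma_{n-1})E_n - \Gamma_{N-1}E_N.
\]
Writing $\gamma_n = e^{i\phi n}\Gamma_n$, one checks that $e^{i\phi}\Gamma_n - \Gamma_{n-1} = e^{-i\phi(n-1)}(\gamma_n-\gamma_{n-1})$, so each coefficient has modulus $\lvert \gamma_n-\gamma_{n-1}\rvert = \lvert e^{i\phi}\Gamma_n - \Gamma_{n-1}\rvert$. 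Since $\{\gamma_n\}$ has bounded variation and $\lvert E_n\rvert = 1$, the middle sum converges absolutely as $N\to\infty$, while $\Gamma_n\to 0$ kills the boundary term $\Gamma_{N-1}E_N$. Hence $S$ converges.

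Finally, for the bound I would pass to the limit and estimate in absolute value, obtaining $\lvert S\rvert \le \lvert g(\eta)\rvert\bigl(\lvert\Gamma_0\rvert + \sum_{n=1}^\infty\lvert\gamma_n-\gamma_{n-1}\rvert\bigr)$. The factor of $2$ then comes from telescoping: because $\gamma_n\to 0$, we have $\lvert\Gamma_0\rvert = \lvert\gamma_0\rvert \le \sum_{n=0}^\infty\lvert\gamma_{n+1}-\gamma_n\rvert$, and reindexing identifies both sums with $\sum_{n=0}^\infty\lvert e^{i\phi}\Gamma_{n+1}-\Gamma_n\rvert$, yielding the stated estimate. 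The only delicate point is the bookkeeping of boundary terms in the summation by parts; the substantive idea is the exact cancellation above, after which everything reduces to a convergent series controlled by the total variation of $\{e^{i\phi n}\Gamma_n\}$.
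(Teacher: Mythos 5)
Your proof is correct and follows essentially the same route as the paper's: the cancellation you identify is exactly the ``algebraic manipulations'' that reduce the partial sum to $g(\eta)\sum\bigl(e^{i\phi}\Gamma_n e^{ik[n\eta+2\theta_n]}-\Gamma_n e^{ik[(n+1)\eta+2\theta_{n+1}]}\bigr)$, after which both arguments do the same index shift, bound the interior sum by the variation of $e^{i\phi n}\Gamma_n$, and control the boundary terms by the telescoping estimate $\lvert\Gamma_L\rvert\le\sum_{n\ge L}\lvert e^{i\phi}\Gamma_{n+1}-\Gamma_n\rvert$. All steps check out, including the bookkeeping that produces the factor of $2$.
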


\begin{proof}
If we take the partial sum $\sum_{n=M}^{N-1}$, algebraic manipulations bring it to the form
\[
g(\eta) \left( \sum_{n=M}^{N-1} e^{i\phi} \Gamma_n  e^{ik[n\eta+2\theta_n]} -  \sum_{n=M}^{N-1}  \Gamma_n e^{ik[(n+1)\eta+2\theta_{n+1}]} \right).
\]
Changing the index $n$ to $n+1$ in the first sum and recombining them, we see that this is equal to
\[
g(\eta) \left( e^{i\phi} \Gamma_M e^{ik[M\eta + 2\theta_M]} - \Gamma_{N-1} e^{ik[N\eta+2\theta_N]}  +  \sum_{n=M}^{N-2} \left( e^{i\phi} \Gamma_{n+1} -  \Gamma_n \right) e^{ik[(n+1)\eta+2\theta_{n+1}]} \right)
\]
so it is bounded in absolute value by
\[
\lvert g(\eta) \rvert \left( \lvert \Gamma_M \rvert + \lvert \Gamma_{N-1} \rvert + \sum_{n=M}^{N-2} \lvert e^{i\phi} \Gamma_{n+1} - \Gamma_n \rvert \right).
\]
The claim now follows easily if we use
\[
\lvert \Gamma_L \rvert = \left\lvert \sum_{n=L}^\infty (e^{i\phi (n+1)} \Gamma_{n+1} - e^{i\phi n} \Gamma_n ) \right\rvert \le \sum_{n=L}^\infty \lvert e^{i\phi}\Gamma_{n+1} - \Gamma_n \rvert. \qedhere
\]
\end{proof}

In our method, the term \eqref{3.5} appears multiplied by a factor denoted $f_{I,J,I-J,0}(\eta;0,\dots,0;0,\dots,0)$ in the notation of Sections 8--9 of \cite{Lukic1}. By Lemma~\ref{L3.1}, the difference between it and
\begin{equation}\label{3.6}
g_{I,J,I-J,0}(\eta;0,\dots,0;0,\dots,0) \sum_{n=0}^{N-1} \alpha_n^I \bar\alpha_n^J e^{i(I-J)[(n+1)\eta + 2 \theta_n]} \left( e^{2i(I-J)(\theta_{n+1} - \theta_n)} - 1 \right)
\end{equation}
is bounded uniformly in $N$ and in $e^{i\eta} \in \partial\mathbb{D}$. The factor $e^{2i(I-J)(\theta_{n+1} - \theta_n)} - 1$ is of order $O(\lvert \alpha_n\rvert)$ and can also be written down as a sum of terms of the form \eqref{3.5}, which is used to drive an iterative scheme: replacing \eqref{3.5} by \eqref{3.6}, using \eqref{3.3} to write \eqref{3.6} as a finite linear combination of sums of the form \eqref{3.5} with strictly greater $I+J$, and iterating; see \cite{Lukic1} for details. From the recurrence relations for $f$'s and $g$'s ((8.10) and (8.13) of \cite{Lukic1}) it follows by induction that as $\eta \to 0$,
\begin{align*}
f_{I,J,I-J,0}(\eta;0,\dots,0; 0 \dots,0) & = O(\lvert \eta\rvert^{-(I+J-1)}), \\
g_{I,J,I-J,0}(\eta; 0,\dots, 0; 0 \dots,0) & = O(\lvert \eta \rvert^{-(I+J)}).
\end{align*}
Our estimate of $\log r_N$ uses $g$'s with $I+J \le p-1$, which are all $O(\lvert \eta \rvert^{1-p})$. In the end, there are remaining terms containing $p$-fold products of $\alpha$'s, which are summable by $\alpha \in \ell^p$; they are preceded by $f$'s with $I+J \le p$, which are also all $O(\lvert \eta \rvert^{1-p})$. Thus, the proof is complete.

\section{Asymptotic behavior of the a.c.\ part of the measure for real-valued power-law decaying Verblunsky coefficients} \label{S4}

We begin by reviewing the basic properties of two useful transformations of measures on the unit circle: sieving and the Szeg\H o mapping. See \cite[Section 1.6]{OPUC1} and \cite[Section 13.1]{OPUC2}, respectively, for details.

If $\mu$ is a probability measure on $\partial\mathbb{D}$ and $N$ a positive integer, the sieved measure $\mu^{\{N\}}$ is given by
\[
d\mu^{\{N\}} (\theta) = \frac 1N d\mu(N\theta).
\]
Its Verblunsky coefficients, $\alpha^{\{N\}}_n$, are given in terms of the original Verblunsky coefficients as
\[
\alpha^{\{N\}}_n = \begin{cases} \alpha_{(n-N+1)/N} & n\equiv N-1 \pmod{N} \\
0 &  n\not\equiv N-1 \pmod{N}
\end{cases}
\]

We now describe the Szeg\H o mapping. Let $\gamma$ be a probability measure supported on $[-2,2]$. The Szeg\H o mapping relates $\gamma$ to a probability measure $\nu$ supported on $\partial \mathbb{D}$ which is symmetric with respect to complex conjugation ($\theta \mapsto -\theta$) and such that for Borel functions $g: [-2,2] \to \mathbb{R}$,
\[
\int_{[0,2\pi]} g(2 \cos \theta) d\nu(\theta) = \int_{[-2,2]} g(x) d\gamma(x).
\]
If $d\gamma(x) = f(x) dx + d\gamma_\s$ and $d\nu(\theta) = v(\theta) \frac{d\theta}{2\pi} + d\nu_\s$ are Lebesgue decompositions of $\gamma$ and $\nu$, then
\begin{equation}\label{4.1}
v(\theta) = 2 \pi \lvert \sin\theta\rvert f(2\cos \theta).
\end{equation}
Geronimus discovered that the Jacobi parameters $\{a_n, b_n\}_{n=1}^\infty$ of $\gamma$ can be expressed in terms of Verblunsky coefficients $\{\beta_n\}_{n=0}^\infty$ of $\nu$, as
\begin{align*}
a_{n+1}^2 & = (1-\beta_{2n-1})(1-\beta_{2n}^2)(1+\beta_{2n+1}) \\
b_{n+1} & = (1-\beta_{2n-1}) \beta_{2n} - (1+\beta_{2n-1}) \beta_{2n-2}
\end{align*}
with the convention $\beta_{-1} = -1$.

These two constructions combine nicely. Let us start with the measure $\mu$ with real-valued Verblunsky coefficients $\{\alpha_n\}_{n=0}^\infty$, take $\nu = \mu^{\{2\}}$ and the measure $\gamma$ on $[-2,2]$ which corresponds to $\nu$ via the Szeg\H o mapping. It has
\begin{align*}
a_{n+1}^2 & = (1 - \alpha_{n-1})(1+\alpha_n)
\end{align*}
and $b_n = 0$.

In particular, if
\[
\alpha_n = (n+2)^{-\tau}
\]
for some $\tau \in (0,1)$, it is easy to see that $a_n \le a_{n+1} \le 1$ and $\lim_{n\to\infty} a_n = 1$, so the measure $\gamma$ is exactly of the form considered in Section 3 of Kreimer--Last--Simon~\cite{KreimerLastSimon09}. They introduce for $x\in (-2,2)$ the integer
\[
N(x) = \sup \{ n\in \mathbb{N} \mid 2 a_n \le \lvert x \rvert \}
\]
as well as
\[
\gamma_n(x) = \arccosh \frac{\lvert x\rvert}{2 a_n}, \qquad \text{for }n \le N(x),
\]
and they prove that
\[
\left\lvert - \frac 12 \log f(x)  - \sum_{n=1}^{N(x)} \gamma_n(x) \right\rvert \le C + \log N(x) - \log(a_{N(x)+2} - a_{N(x)+1})
\]
for some $x$-independent constant $C$.

Since $a_n$ behaves asymptotically as
\[
2 - 2 a_n  = n^{-2 \tau} + O(n^{-\tau-1}), \qquad n\to \infty,
\]
and
\[
a_{n+2} - a_{n+1} \sim \tau (n+2)^{-1 - 2 \tau},
\]
we conclude that
\[
N(x) \sim (2 - \lvert x\rvert)^{1/(2\tau)}, \qquad x \to \pm 2
\]
and therefore
\[
\left\lvert - \frac 12 \log f(x)  - \sum_{n=1}^{N(x)} \gamma_n(x) \right\rvert = O(\log(2 - \lvert x\rvert)), \qquad  x  \to \pm 2.
\]
Denoting $\delta = 2 - \lvert x \rvert$ and $\delta_n = 2 - 2 a_n - \delta$, we can write
\[
\sum_{n=1}^{N(x)} \gamma_n(x) = \sum_{n=1}^{N(x)} F\left(\frac{\delta_n}{2-\delta}\right) 
\]
where $F(z) = \arccosh( 1 / (1-z))$. Using the Taylor expansion of $F(z)$ similarly to the Example 5.3 of \cite{KreimerLastSimon09}, we can conclude that
\[
\log f(x) \sim -2 \sum_{n=1}^{N(x)} \gamma_n(x)  \sim C (2 - \lvert x \rvert)^{1/2 - 1/(2\tau)}, \qquad x \to \pm 2.
\]
Using \eqref{4.1}, this implies that for the original measure on $\partial\mathbb{D}$,
\[
\log w(\theta) \sim C \lvert \sin \theta \rvert^{1 - 1/\tau}, \qquad \theta \to 0.
\]
Thus, $ \sin^{2m} \theta \log w(\theta)$ is integrable if and only if $1/\tau < 2m+2$. Since $\alpha \in \ell^{2m+2}$ if and only if $1/\tau < 2m+2$, this provides an alternative proof of Theorem~\ref{T1.1} given Theorem~\ref{T1.4}.

\bibliographystyle{amsplain}

\providecommand{\bysame}{\leavevmode\hbox to3em{\hrulefill}\thinspace}
\providecommand{\MR}{\relax\ifhmode\unskip\space\fi MR }
\providecommand{\MRhref}[2]{%
  \href{http://www.ams.org/mathscinet-getitem?mr=#1}{#2}
}
\providecommand{\href}[2]{#2}

\end{document}